\newcommand*{\mailto}[1]{\href{mailto:#1}{\nolinkurl{#1}}}
\newtheorem{theorem}{Theorem}[section]
\newtheorem{lemma}[theorem]{Lemma}
\newtheorem{corollary}[theorem]{Corollary}
\newcommand{\R}{{\mathbb R}}
\newcommand{\N}{{\mathbb N}}
\newcommand{\Z}{{\mathbb Z}}
\newcommand{\C}{{\mathbb C}}
\newcommand{\nn}{\nonumber}
\newcommand{\be}{\begin{equation}}
\newcommand{\ee}{\end{equation}}
\newcommand{\ti}{\tilde}
\newcommand{\floor}[1]{\lfloor#1 \rfloor}
\newcommand{\ceil}[1]{\lceil#1 \rceil}
\newcommand{\eps}{\varepsilon}
\newcommand{\sig}{\sigma}
\newcommand{\lam}{\lambda}
\numberwithin{equation}{section}
\begin{document}

\title{Relative Oscillation Theory for Jacobi Matrices}

\author[K. Ammann]{Kerstin Ammann}
\address{Faculty of Mathematics\\
Nordbergstrasse 15\\ 1090 Wien\\ Austria}
\email{\mailto{Kerstin.Ammann@univie.ac.at}}
\urladdr{\url{http://www.mat.univie.ac.at/~kerstin/}}

\author[G. Teschl]{Gerald Teschl}
\address{Faculty of Mathematics\\
Nordbergstrasse 15\\ 1090 Wien\\ Austria\\ and International Erwin Schr\"odinger
Institute for Mathematical Physics, Boltzmanngasse 9\\ 1090 Wien\\ Austria}
\email{\mailto{Gerald.Teschl@univie.ac.at}}
\urladdr{\url{http://www.mat.univie.ac.at/~gerald/}}

\thanks{{\it in Proceedings of the 14th International Conference on Difference Equations and Applications,
M. Bohner (ed) et al., 105--115, U\u{g}ur--Bah\c{c}e\c{s}ehir University Publishing Company, Istanbul, 2009.}}
\thanks{{\it Research supported by the Austrian Science Fund (FWF) under Grant No.\ Y330}}

\keywords{Jacobi matrices, oscillation theory}
\subjclass[2000]{Primary 39A10, 47B36; Secondary 34C10, 34L05}

\begin{abstract}
We develop relative oscillation theory for Jacobi matrices
which, rather than counting the number of eigenvalues of one single matrix, counts the
difference between the number of eigenvalues of two different matrices.
This is done by replacing nodes of solutions associated with one matrix by weighted nodes of Wronskians of
solutions of two different matrices.
\end{abstract}

\maketitle

\section{Introduction}
\label{sec:int}

Oscillation theory for second-order differential and difference equations has a long tradition
originating in the seminal work of Sturm from 1836 \cite{stu}. Since then the subject is
continuously growing and many monographs have been devoted entirely to this subject.
The most recent one being the monumental treatise by Agarwal, Bohner, Grace, and O'Regan
\cite{abgr}. One of the key results of classical oscillation theory is the fact, the $k$'th eigenfunction
has precisely $k-1$ nodes (i.e., sign flips) and for a suitably chosen solution of the underlying difference
equation, the number of nodes of this solutions equals the number of eigenvalues below a given
value. Our aim is add a new wrinkle to this classical result by showing that the number of
weighted nodes of the Wronskian (also known as Casoratian) of two suitable solutions of two
different Jacobi difference equations can be used to count the difference between the number of
eigenvalues of the two associated Jacobi matrices.

That Wronskians are related to oscillation theory is indicated by an old paper of Leighton \cite{lei},
who noted that if two solutions have a non-vanishing Wronskian, then their zeros must intertwine each other.
However, it seems their real power was realized only later by Gesztesy, Simon, and Teschl in \cite{gst} with
the corresponding extension to Jacobi operators given by Teschl \cite{tosc}. For a pedagogical discussion
we refer to the survey by Simon \cite{sim}. That these results are just
the tip of the iceberg was discovered only recently by Kr\"uger and Teschl \cite{kt}, \cite{kt2}, \cite{kt3}.
Our result generalizes the main result for the case of Sturm--Liouville operators from \cite{kt} 
to the case of Jacobi matrices. 

To set the stage, let us fix some real numbers $a(j)<0$, $b(j)$, $j=1,\cdots,N-1$ and consider the
Jacobi matrix
\be
H = \left(
\begin{matrix}
b(1)	& a(1) 		& 0		  		& 0		  		& 0 			\\
a(1)	& b(2) 		& \ddots  		& 0		  		& 0 			\\
0 			& \ddots	  	& \ddots    	& \ddots 		& 0 			\\
0			& 0 	    	& a(N - 1) 	& b(N - 2) 	& a(N - 2) 	\\
0			& 0				& 0				& a(N - 2)	& b(N - 1) 	\\
\end{matrix}\right).
\ee
in the Hilbert space $\C^{N-1}$. Furthermore, let $s_\pm(z,n)$ be the solutions of the underlying difference
equation (set $a(0)=a(N-1)=a(N)=-1$, $b(N)=0$)
\be\label{jde}
a(n) u(n+1) + b(n) u(n) + a(n-1) u(n-1) = z u(n), \qquad n=1, \dots, N,
\ee
corresponding to the initial conditions
\be
s_-(z,0)=0, s_-(z,1)=1, \qquad s_+(z,N)=0, s_+(z,N+1)=1.
\ee
Note that $s_-(\lam,n)$ (resp.\ $s_+(\lam,n)$) will be an eigenvector of $H$ corresponding to the eigenvalue
$\lam\in\R$ if and only if $s_-(\lam,N)=0$ (resp.\ $s_+(\lam,0)=0$). We will abbreviate $s(z,n)=s_-(z,n)$.

We call $n$ a node of a solution $u$ of \eqref{jde} if either 
\be
u(n) = 0 \quad\text{ or }\quad u(n) u(n + 1) < 0.
\ee
We say that a node $n_0$ of $u$ \emph{lies between $m$ and $n$} if either 
\be
m < n_0 < n \quad\text{ or }\quad n_0 = m \text{ but } u(m) \ne 0.
\ee
$\#_{(m,n)}(u)$ denotes the number of nodes of $u$ between $m$ and $n$ and $\#(u) = \#_{(0,N)}(u)$.

Then we have the following classical result alluded to before (see e.g., \cite{at}, \cite{tjac}):

\begin{theorem}
Let $H$ be a Jacobi matrix and $s(z,n)$ a corresponding solution of the underlying difference
equation \eqref{jde} corresponding to the initial condition $s(z,0)=0$. Then for every $\lam\in\R$ the number of nodes of
$s(\lam,n)$ equals the number of eigenvalues of $H$ below $\lam$:
\be
\#(s(\lam)) = \# \{E \in \sig(H) | E < \lam \}.
\ee
Here $\sig(H)$ denotes the spectrum of $H$, that is, the set of eigenvalues.
\end{theorem}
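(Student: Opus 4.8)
The plan is to prove the result by a continuity/deformation argument in the spectral parameter $\lam$, combined with a careful analysis of how nodes of $s(\lam,n)$ are created as $\lam$ increases past an eigenvalue. First I would establish the base case: for $\lam$ sufficiently negative (below $\min\sig(H)$), the solution $s(\lam,n)$ has no sign changes on $\{0,1,\dots,N\}$. This follows because for $\lam < \min\sig(H)$ the difference equation is disconjugate; concretely, one can check directly from the recursion \eqref{jde} with $a(n)<0$ that $s(\lam,n)>0$ for all $n=1,\dots,N$ when $z=\lam$ is small enough, so $\#(s(\lam))=0$, matching $\#\{E\in\sig(H)\mid E<\lam\}=0$.

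Next I would analyze the behaviour of $s(\lam,N)$ as a function of $\lam$. Since $s(\lam,n)$ is a polynomial in $\lam$ for each fixed $n$, the function $\lam\mapsto s(\lam,N)$ is a polynomial whose zeros are precisely the eigenvalues of $H$ (by the remark in the text that $s(\lam,\cdot)$ is an eigenvector iff $s(\lam,N)=0$). The key step is to track $\#_{(0,N)}(s(\lam))$ as $\lam$ moves along $\R$: this integer is piecewise constant and can only change when a node crosses the endpoint $n=N$, i.e. when $s(\lam,N)$ changes sign, which happens exactly at the eigenvalues. Here one needs the standard Sturm-type monotonicity fact: comparing the difference equation at $\lam$ and at $\lam+\eps$ via the discrete Wronskian $W_n(u,v)=a(n)(u(n)v(n+1)-u(n+1)v(n))$, one gets a Green's-type identity showing $W_n(s(\lam),s(\lam+\eps))$ is monotone in $n$ with sign determined by $\eps$; this forces each node of $s(\lam,n)$ to move monotonically (to the left, toward $n=0$) as $\lam$ increases. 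Consequently, as $\lam$ increases through an eigenvalue $E_k$, exactly one new node enters through the right endpoint $n=N$, so $\#(s(\lam))$ jumps by exactly $1$ at each eigenvalue and is otherwise constant.

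I would then combine these ingredients: starting from $\#(s(\lam))=0$ for $\lam\ll 0$ and adding $1$ at each eigenvalue below the current $\lam$ (and nothing else), we conclude $\#(s(\lam))=\#\{E\in\sig(H)\mid E<\lam\}$ for all $\lam\in\R$. Some care is needed with the convention for when a node "lies between $m$ and $n$" at the left endpoint $n_0=m=0$: since $s(\lam,0)=0$ by the initial condition, the point $n=0$ is never counted, which is consistent with the bookkeeping above. A small bit of bookkeeping is also required at $\lam$ exactly equal to an eigenvalue, where $s(\lam,N)=0$ and $n=N$ itself becomes a node that is \emph{not} counted in $\#_{(0,N)}$; one checks this matches the strict inequality $E<\lam$ in the statement.

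The main obstacle is making precise the claim that nodes move monotonically and can only be created at the right endpoint $n=N$ — in the discrete setting a "node" can be either a zero of $u$ or a genuine sign flip between consecutive integers, and as $\lam$ varies a zero at an interior point $n$ can split into or merge with a sign-flip node, so one must verify that the \emph{total} count $\#_{(0,N)}$ is unaffected by such interior rearrangements and only responds to what happens at the boundary. The cleanest way to handle this is to prove a discrete comparison lemma: if $W_n(u,v)$ does not vanish between $m$ and $n$, then $|\#_{(m,n)}(u)-\#_{(m,n)}(v)|\le 1$, and track the sign of the Wronskian $W_n(s(\lam),s(\lam+\eps))$ at $n=0$ and $n=N$ to pin down exactly when the difference is realized. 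This is the discrete analogue of the classical Sturm comparison argument and is where the structural hypothesis $a(n)<0$ is essential.
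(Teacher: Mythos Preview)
The paper does not actually prove this theorem: it is stated as a classical result with the parenthetical ``(see e.g., \cite{at}, \cite{tjac})'' and no proof is given. So there is no ``paper's own proof'' to compare against.

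That said, your outline is one of the standard arguments and is essentially correct. The base case, the identification of eigenvalues with zeros of $\lam\mapsto s(\lam,N)$, and the bookkeeping at the endpoints (including the point that at $\lam=E\in\sig(H)$ the vanishing $s(E,N)=0$ does \emph{not} produce a counted node in $(0,N)$, matching the strict inequality $E<\lam$) are all handled correctly. You also correctly identify the one genuine issue: showing that the interior node count cannot change except through the right endpoint. Your proposed fix via a discrete Sturm comparison lemma for the Wronskian is exactly the right tool.

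It is worth noting that the Pr\"ufer-angle machinery the paper develops in Section~\ref{sec:pa} gives a cleaner alternative, and this is presumably the route taken in the cited references. With the normalization \eqref{normalth} one has $\#(s(\lam))=\ceil{\theta_{s(\lam)}(N)/\pi}-\floor{\theta_{s(\lam)}(0)/\pi}-1$, and since $s(\lam,0)=0$ forces $\theta_{s(\lam)}(0)\equiv 0\bmod\pi$ independently of $\lam$, the whole argument reduces to showing that $\theta_{s(\lam)}(N)$ is strictly increasing in $\lam$ and crosses a multiple of $\pi$ exactly at each eigenvalue. That monotonicity follows from the same Green's-type identity you invoke (it is the $H_0=H_1$, $\lam$-derivative analogue of Lemma~\ref{prwpsiepsdot}). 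The advantage of the Pr\"ufer route is that it bypasses the delicate ``nodes can be zeros or sign flips and may reorganize internally'' discussion entirely: the ceiling of the angle is an integer-valued, manifestly monotone quantity, so no separate comparison lemma is needed.
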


To generalize this result we will now consider two Jacobi matrices $H_0$ and $H_1$ associated with the coefficients
$a_0(n)=a_1(n) \equiv a(n)$ and $b_0(n)$ respectively $b_1(n)$. The corresponding solutions will be denoted by
$s_{j,\pm}(n)$, $j=0,1$, in obvious notation. Given two solutions $u_j$ of the difference equations associated with
$H_j$ we denote by
\be
W_n(u_0,u_1)= a(n)(u_0(n) u_1(n+1) - u_0(n+1) u_1(n))
\ee
their Wronskian. As already anticipated we will relate the number of nodes of such Wronskians to the difference between
the eigenvalues of $H_0$ and $H_1$. Since this difference is a signed quantity, we will need to weight the nodes according
to the sign of the difference between $H_0$ and $H_1$ as follows: Set
\be
\#_n(u_0,u_1) = \begin{cases}
	& \text{if } b_0(n+1) - b_1(n+1) > 0 \text{ and } \\
1,	& \text{either } W_n(u_0,u_1) W_{n+1}(u_0,u_1) < 0 \\ 
	& \text{or } W_n(u_0,u_1) = 0 \text{ and } W_{n+1}(u_0,u_1) \neq 0, \\[2mm]
	& \text{if } b_0(n+1) - b_1(n+1) < 0 \text{ and } \\
-1,	& \text{either } W_n(u_0,u_1) W_{n+1}(u_0,u_1) < 0 \\
	& \text{or } W_n(u_0,u_1) \neq 0 \text{ and } W_{n+1}(u_0,u_1) = 0, \\[2mm]
0,	& \text{otherwise}.
\end{cases}
\ee
Then we say the Wronskian has a weighted node at $n$ if $\#_n(u_0,u_1) \neq 0$.
The number of weighted nodes of the Wronskian between $0$ and $N$ is denoted as
\be
\#(u_0, u_1) = \sum_{j=0}^{N-1} \#_j(u_0,u_1) -
\begin{cases}
0, & \text{if } W_{0}(u_0,u_1) \neq 0,\\
1, & \text{if } W_{0}(u_0,u_1) = 0.
\end{cases}
\ee
With this notation our main result reads

\begin{theorem}\label{thm:main}
Let $H_0$, $H_1$ be two Jacobi matrices with $a_0=a_1$ and $s_{j,\pm}(z,n)$, $j=0,1$, the corresponding solutions
of the underlying difference equations. Then for $\lam_0,\lam_1\in\R$ the number of weighted nodes of
$W(s_{0,-}(\lam_0),s_{1,+}(\lam_1))$ equals the number of eigenvalues of $H_1$ below $\lam_1$ minus the
number of eigenvalues of $H_0$ below or equal to $\lam_0$:
\begin{align}\nn
&\#(s_{0,-}(\lam_0),s_{1,+}(\lam_1)) = \#(s_{0,+}(\lam_0),s_{1,-}(\lam_1)) =\\
& \qquad  = \# \{E \in \sig(H_1) | E < \lam_1\} - \# \{E \in \sig(H_0) | E\leq \lam_0 \}.
\end{align}
Here $\sig(H)$ denotes the spectrum of $H$, that is, the set of eigenvalues.
\end{theorem}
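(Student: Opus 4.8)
The plan is to reduce the eigenvalue count to Sylvester's law of inertia by means of a single Wronskian identity together with the standard fact that $s_{j,-}$ and $s_{j,+}$ encode the leading and trailing principal minors of $H_j-\lam$; for a statement $P$ I write $[P]$ for $1$ if $P$ holds and $0$ otherwise. First I would record the Wronskian difference formula: if $u_0$ solves \eqref{jde} for $H_0$ at energy $\lam_0$ and $u_1$ solves it for $H_1$ at energy $\lam_1$, then
\be
W_n(u_0,u_1) - W_{n-1}(u_0,u_1) = \bigl(b_0(n)-b_1(n)-\lam_0+\lam_1\bigr)\,u_0(n)\,u_1(n),
\ee
obtained by multiplying the two difference equations by $u_1(n)$ and by $u_0(n)$ and subtracting. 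Hence at a site where the Wronskian is small its increment has the sign of the relative perturbation $b_0(n)-b_1(n)-\lam_0+\lam_1$ times $u_0(n)u_1(n)$, which is exactly the data appearing in $\#_n(u_0,u_1)$. It therefore suffices to treat the case $\lam_0=\lam_1=\lam$, to which the general case reduces upon replacing $H_j$ by $H_j-\lam_j$ (which leaves $s_{j,\pm}$ and $W$ unchanged, translates the spectra, and replaces $b_0(n)-b_1(n)$ by the relative perturbation). Writing $s_{0,-}(\lam,\cdot)$ and $s_{1,+}(\lam,\cdot)$ in terms of the leading, respectively trailing, principal minors of $H_0-\lam$ and $H_1-\lam$, I would then derive
\be
W_n\bigl(s_{0,-}(\lam),s_{1,+}(\lam)\bigr) = c_n\,\det M^{(n)},\qquad n=0,\dots,N-1,
\ee
with nonzero constants $c_n$ whose sign is independent of $n$, where $M^{(n)}$ is the tridiagonal matrix with the off-diagonals $a$ that agrees with $H_0-\lam$ in the first $n$ rows and columns and with $H_1-\lam$ in the remaining ones; in particular $M^{(0)}=H_1-\lam$ and $M^{(N-1)}=H_0-\lam$. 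Two elementary remarks finish the setup: $b_0(N)=b_1(N)=b(N)=0$ forces $\#_{N-1}(s_{0,-},s_{1,+})=0$, and by the identity above $W_0$ vanishes precisely when $\lam\in\sig(H_1)$ and $W_{N-1}$ precisely when $\lam\in\sig(H_0)$.

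The core of the proof is a telescoping inertia count along the chain $M^{(0)},M^{(1)},\dots,M^{(N-1)}$. Consecutive matrices $M^{(n-1)}$ and $M^{(n)}$ differ only in the diagonal entry at site $n$, which jumps by $b_0(n)-b_1(n)$, so by Cauchy interlacing the number $\#\mathrm{neg}(A)$ of negative eigenvalues of $A$ changes by $0$ or $\pm1$ between $A=M^{(n-1)}$ and $A=M^{(n)}$, the exact value being determined by $\sgn\bigl(b_0(n)-b_1(n)\bigr)$ and the signs and vanishing of $\det M^{(n-1)}\propto W_{n-1}$ and $\det M^{(n)}\propto W_n$. Comparing with the definition of the weights I would verify that $\#\mathrm{neg}\,M^{(n)}-\#\mathrm{neg}\,M^{(n-1)}=-\#_{n-1}(s_{0,-},s_{1,+})$ whenever neither $W_{n-1}$ nor $W_n$ vanishes, while at the (necessarily boundary) configurations where one of them vanishes the one-sided conventions built into $\#_n$ create a controlled mismatch, confined to the two ends of the chain and contributing altogether the terms $[\lam\in\sig(H_1)]$ and $-[\lam\in\sig(H_0)]$ below. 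Summing over $n$, using $\#_{N-1}=0$, and telescoping $\#\mathrm{neg}\,M^{(N-1)}-\#\mathrm{neg}\,M^{(0)}=\#\mathrm{neg}(H_0-\lam)-\#\mathrm{neg}(H_1-\lam)$, this gives
\be
\sum_{j=0}^{N-1}\#_j(s_{0,-},s_{1,+}) = \#\mathrm{neg}(H_1-\lam)-\#\mathrm{neg}(H_0-\lam)+[\lam\in\sig(H_1)]-[\lam\in\sig(H_0)].
\ee
Since $\#\mathrm{neg}(H_j-\lam)=\#\{E\in\sig(H_j)\mid E<\lam\}$, and the boundary term subtracted in $\#(s_{0,-},s_{1,+})$ is exactly $[W_0=0]=[\lam\in\sig(H_1)]$, that subtraction cancels the term $[\lam\in\sig(H_1)]$ (keeping the inequality for $H_1$ strict), while the remaining $[\lam\in\sig(H_0)]$ turns "$E<\lam$" into "$E\le\lam$" for $H_0$; undoing the replacement $H_j\mapsto H_j-\lam_j$ yields $\#(s_{0,-}(\lam_0),s_{1,+}(\lam_1))=\#\{E\in\sig(H_1)\mid E<\lam_1\}-\#\{E\in\sig(H_0)\mid E\le\lam_0\}$. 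The identity for $\#(s_{0,+}(\lam_0),s_{1,-}(\lam_1))$ follows from the mirror-image argument built from the right end, equivalently by reflecting $n\mapsto N-n$, which interchanges $s_{j,-}$ and $s_{j,+}$ and leaves both spectra and the right-hand side invariant.

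The step I expect to be the main obstacle is establishing the matching $\#\mathrm{neg}\,M^{(n)}-\#\mathrm{neg}\,M^{(n-1)}=-\#_{n-1}$, and its controlled failure, precisely at the configurations where some $W_k$ vanishes. One must show that an \emph{interior} vanishing ($0<k<N$ and $\lam\notin\sig(H_0)\cup\sig(H_1)$) contributes no net discrepancy --- weighted nodes are conserved and can only migrate between neighbouring sites --- whereas each of the two boundary vanishings contributes the stated $\pm1$; this requires a careful case analysis over the sign patterns of $b_0-b_1$ and of the triple $(W_{k-1},W_k,W_{k+1})$, with the difference formula used to rule out impossible patterns (for instance $W_{k-1}W_{k+1}<0$ together with $W_k=0$ forces the relative perturbation to have a single fixed sign at both $k$ and $k+1$). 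A more analytic alternative, closer to the Sturm--Liouville result being generalized, would be to deform the data $(b_0,b_1,\lam_0,\lam_1)$ continuously from the base point $b_0=b_1$ --- where the Wronskian is constant in $n$, so the count is visibly $0$, or $-1$ when $\lam_0\in\sig(H_0)$ --- and to show with the same difference formula that along the deformation both sides of the asserted identity are locally constant apart from matching jumps, $+1$ when $\lam_1$ meets $\sig(H_1)$ and $-1$ when $\lam_0$ meets $\sig(H_0)$. In either approach the delicate point is the precise bookkeeping of the one-sided conventions, which is exactly what is responsible for the asymmetry between "$E<\lam_1$" and "$E\le\lam_0$".
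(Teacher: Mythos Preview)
Your outline is sound and genuinely different from the paper's argument. The paper never introduces the hybrid matrices $M^{(n)}$ or Sylvester inertia; instead it works entirely with Pr\"ufer angles. Section~2 establishes the identity
\[
\#(u_0,u_1)=\bigl\lceil\Delta_{u_0,u_1}(N)/\pi\bigr\rceil-\bigl\lfloor\Delta_{u_0,u_1}(0)/\pi\bigr\rfloor-1,
\]
via a sequence of angular lemmas (Lemmas~2.1--2.5) that do, in Pr\"ufer language, exactly the case analysis you anticipate having to do over sign patterns of $(W_{k-1},W_k,W_{k+1})$. The proof proper then interpolates continuously, $H_\eps=(1-\eps)H_0+\eps H_1$, after first splitting $b_0-b_1=b_+-b_-$ into nonnegative parts so that on each half of the deformation the relevant boundary Pr\"ufer angle $\theta_{\eps,-}(\lam,N)$ is monotone in $\eps$; one then checks that the node count and the eigenvalue count below $0$ are piecewise constant with identical jumps (occurring precisely when $0\in\sigma(H_\eps)$) and agree at $\eps=0$. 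Your ``analytic alternative'' in the last paragraph is essentially this argument, minus the $b_\pm$ splitting that secures monotonicity.

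What your discrete chain $M^{(0)},\dots,M^{(N-1)}$ buys is that it avoids Pr\"ufer variables altogether and handles mixed signs of $b_0(n)-b_1(n)$ step by step without any global splitting; the identity $W_n=c_n\det M^{(n)}$ with $c_n$ of fixed sign (which does hold, the positivity coming from $a(j)<0$) makes the link to inertia transparent. The cost is precisely the bookkeeping you flag: when two or more consecutive $W_k$ vanish (forced by $(b_0(k)-b_1(k))\,u_0(k)\,u_1(k)=0$ via the difference formula), your ``migration of weighted nodes'' argument must be extended beyond isolated zeros, and the zero eigenvalue of $M^{(k)}$ need not be simple. This is not an obstruction --- the paper's Lemma~2.5 handles exactly these degenerate configurations, and its proof translates almost verbatim into your determinant language --- but it is where most of the work lies, so if you pursue this route you should expect the case analysis to be comparable in length to Section~2.
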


The proof is based on Pr\"ufer angles to be investigated in Section~\ref{sec:pa}. It
will be given in Section~\ref{sec:proof}.

An extension to Jacobi operators on $\N$ respectively $\Z$ is in preparation.

\section{Pr\"ufer angles}
\label{sec:pa}

Since any nontrivial solution of \eqref{jde} cannot vanish at two consecutive points we can introduce Pr\"ufer variables
$(\rho_u(n),\theta_u(n))$ in the usual way (cf., e.g., \cite[Chap.~4]{tjac}) via
\be
u(n) = \rho_u(n) \sin(\theta_u(n)), \qquad
u(n + 1) = \rho_u(n) \cos(\theta_u(n)).
\ee
Note that $\rho_u(n) > 0$ for all $n \in \Z$ and $\theta_u(n)$ is only defined up to an additive integer multiple of $2 \pi$,
depending on $n$. For our further investigations it is essential to gain unique values for the Pr\"ufer angle and therefore
we fix $\theta_u(0)$ and require 
\be \label{normalth}
\ceil{\theta_u(n)/ \pi} \le \ceil{\theta_u(n + 1)/ \pi} \le \ceil{\theta_u(n)/ \pi} + 1,
\ee
where $\ceil{x}= \min \{n \in \Z \,|\, n \geq x \}$ denotes the usual ceiling function. Then the following easy result is well-known.

\begin{lemma} \label{gammaGammaint}
Define $k$, $\gamma$, $\Gamma$ via
\be \label{thetaNotation}
\theta_u(n) = k \pi + \gamma, \quad
\theta_u(n + 1) = k \pi + \Gamma, \qquad 
\gamma \in (0,\pi],\: \Gamma \in (0,2 \pi],\: k\in\Z.
\ee
Then
\be 
\gamma \in
\begin{cases}
(0, \frac{\pi}{2}] & \text{iff $n$ is not a node},\\
(\frac{\pi}{2}, \pi] & \text{iff $n$ is a node},
\end{cases}
\ee
and
\be
\Gamma \in
\begin{cases}
(0, \pi] & \text{iff $n$ is not a node},\\
(\pi, 2\pi) &\text{iff $n$ is a node}.
\end{cases}
\ee
Moreover,
\be
\theta_u(n) = k \pi + \frac{\pi}{2} \quad\Leftrightarrow\quad \theta_u(n + 1) = (k + 1) \pi.
\ee
\end{lemma}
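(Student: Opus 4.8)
The plan is to unwind the definitions directly. By the normalization condition \eqref{normalth} the quantity $\ceil{\theta_u(n)/\pi}$ is nondecreasing and increases by at most one per step, so with $k=\ceil{\theta_u(n)/\pi}-1$ we automatically get $\theta_u(n)\in(k\pi,(k+1)\pi]$, which is exactly $\gamma\in(0,\pi]$, and $\theta_u(n+1)\in(k\pi,(k+2)\pi]$, giving $\Gamma\in(0,2\pi]$; this legitimizes the notation in \eqref{thetaNotation}. From here everything is trigonometry. First I would translate the defining relations $u(n)=\rho_u(n)\sin\theta_u(n)$, $u(n+1)=\rho_u(n)\cos\theta_u(n)=\rho_u(n+1)\sin\theta_u(n+1)$ into statements about $\gamma$ and $\Gamma$ using $\rho_u(n)>0$: since $u(n)=\rho_u(n)\sin\gamma$ with $\gamma\in(0,\pi]$, we have $u(n)=0$ iff $\gamma=\pi$, and $u(n)>0$ iff $\gamma\in(0,\pi)$. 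Similarly $u(n+1)=\rho_u(n)\cos\gamma$ has the sign of $\cos\gamma$, which is positive for $\gamma\in(0,\pi/2)$, zero at $\gamma=\pi/2$, negative for $\gamma\in(\pi/2,\pi)$.

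Next I would characterize the node condition. By definition $n$ is a node iff $u(n)=0$ or $u(n)u(n+1)<0$. If $u(n)=0$ then $\gamma=\pi\in(\pi/2,\pi]$. If $u(n)\neq0$, i.e. $\gamma\in(0,\pi)$, then $u(n)$ has the sign of $\sin\gamma>0$, so $u(n)u(n+1)<0$ iff $u(n+1)<0$ iff $\cos\gamma<0$ iff $\gamma\in(\pi/2,\pi)$. Conversely, if $\gamma\in(0,\pi/2]$ then $u(n)>0$ and $u(n+1)=\rho_u(n)\cos\gamma\geq 0$, so $n$ is not a node. This gives the dichotomy for $\gamma$ exactly as stated, with the boundary case $\gamma=\pi/2$ (where $u(n+1)=0$) falling on the non-node side, consistent with the convention that $n+1$, not $n$, would be the node there.

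For $\Gamma$ I would argue analogously using $u(n+1)=\rho_u(n+1)\sin\Gamma$ and the sign of $\sin\Gamma$ on $(0,2\pi]$: $\sin\Gamma>0$ for $\Gamma\in(0,\pi)$, $\sin\Gamma=0$ for $\Gamma\in\{\pi,2\pi\}$, $\sin\Gamma<0$ for $\Gamma\in(\pi,2\pi)$. Since $u(n+1)=\rho_u(n)\cos\gamma$, the cases $\gamma\in(0,\pi/2]$ (non-node, $u(n+1)\geq0$) force $\sin\Gamma\geq0$, hence $\Gamma\in(0,\pi]$, while $\gamma\in(\pi/2,\pi]$ (node, $u(n+1)<0$) forces $\sin\Gamma<0$, hence $\Gamma\in(\pi,2\pi)$; here one uses \eqref{normalth} to rule out $\Gamma=2\pi$ in the node case and $\Gamma\in\{0\}$ throughout. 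Finally, the last equivalence is immediate: $\theta_u(n)=k\pi+\pi/2$ means $\gamma=\pi/2$, i.e. $u(n+1)=\rho_u(n)\cos(\pi/2)=0$; since $u(n+1)=\rho_u(n+1)\sin\theta_u(n+1)$ with $\rho_u(n+1)>0$, this holds iff $\theta_u(n+1)$ is an integer multiple of $\pi$, and the normalization \eqref{normalth} pins it down to exactly $(k+1)\pi$. The only subtlety — and the one point deserving care rather than difficulty — is keeping the half-open interval conventions consistent at the endpoints $\pi/2$, $\pi$, and $2\pi$ so that a node at index $n$ versus $n+1$ is never double-counted; the monotonicity built into \eqref{normalth} is what makes this bookkeeping coherent.
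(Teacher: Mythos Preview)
The paper states this lemma without proof (calling it well-known), so there is no argument to compare against; your direct unwinding of the definitions is the natural route. There are, however, two issues.

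First, a cosmetic one: since $\theta_u(n)=k\pi+\gamma$ you actually have $u(n)=(-1)^k\rho_u(n)\sin\gamma$ and $u(n+1)=(-1)^k\rho_u(n)\cos\gamma$, not $u(n)=\rho_u(n)\sin\gamma$, etc. Your conclusions about $\gamma$ survive because the factor $(-1)^k$ cancels in the product $u(n)u(n+1)$ and is irrelevant for $u(n)=0$, but intermediate claims such as ``$u(n)>0$ iff $\gamma\in(0,\pi)$'' are false as written.

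Second, and more substantively, your treatment of $\Gamma$ and of the final equivalence has a genuine gap at the boundary value $\gamma=\pi/2$. There $u(n+1)=0$, so $\sin\Gamma=0$, which on $(0,2\pi]$ yields $\Gamma\in\{\pi,2\pi\}$; your step ``$\sin\Gamma\ge 0$, hence $\Gamma\in(0,\pi]$'' silently discards $\Gamma=2\pi$. You then assert that \eqref{normalth} ``pins it down to exactly $(k+1)\pi$'', but it does not: with $\theta_u(n)=k\pi+\pi/2$ one has $\ceil{\theta_u(n)/\pi}=k+1$, and both $\theta_u(n+1)=(k+1)\pi$ and $\theta_u(n+1)=(k+2)\pi$ satisfy \eqref{normalth}. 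What actually distinguishes the two is the sign of $u(n+2)=\rho_u(n+1)\cos\theta_u(n+1)$, and this is where the Jacobi equation enters: evaluating the recursion at $n+1$ with $u(n+1)=0$ gives $a(n+1)u(n+2)+a(n)u(n)=0$, and since $a(n),a(n+1)<0$ the values $u(n)$ and $u(n+2)$ have opposite signs, which forces $\cos\theta_u(n+1)$ to have sign $(-1)^{k+1}$ and hence $\Gamma=\pi$. Without invoking the difference equation (and $a<0$) the conclusion is in fact false for general sequences obeying only \eqref{normalth}.
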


As a consequence we obtain

\begin{corollary}
We have
\be
\ceil{\frac{\theta_u(n+1)}{\pi}} = 
\begin{cases}
\ceil{\frac{\theta_u(n)}{\pi}} + 1 & \text{if } $n$ \text{ is a node}, \\
\ceil{\frac{\theta_u(n)}{\pi}} & \text{otherwise}.
\end{cases}
\ee
\end{corollary}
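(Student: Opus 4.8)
The plan is to deduce the statement directly from Lemma~\ref{gammaGammaint}, the only input being the interaction of the ceiling function with the half-open intervals appearing there. First I would fix $n$ and invoke the representation \eqref{thetaNotation}, writing $\theta_u(n) = k\pi + \gamma$ and $\theta_u(n+1) = k\pi + \Gamma$ with $\gamma \in (0,\pi]$, $\Gamma \in (0,2\pi]$ and the \emph{same} $k \in \Z$; this common $k$ is exactly what the normalization \eqref{normalth} guarantees, since it forces $\theta_u(n+1)$ into the $k\pi$-block possibly shifted by one. Using that the ceiling function commutes with the addition of integers, and that $\gamma/\pi \in (0,1]$ forces $\ceil{\gamma/\pi} = 1$, I get $\ceil{\theta_u(n)/\pi} = k + \ceil{\gamma/\pi} = k+1$.

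Next I would split into the two cases of the lemma and compute $\ceil{\theta_u(n+1)/\pi} = k + \ceil{\Gamma/\pi}$ in each. If $n$ is not a node, the lemma gives $\Gamma \in (0,\pi]$, hence $\Gamma/\pi \in (0,1]$ and $\ceil{\Gamma/\pi} = 1$, so $\ceil{\theta_u(n+1)/\pi} = k+1 = \ceil{\theta_u(n)/\pi}$. If $n$ is a node, the lemma gives $\Gamma \in (\pi,2\pi)$, hence $\Gamma/\pi \in (1,2)$ and $\ceil{\Gamma/\pi} = 2$, so $\ceil{\theta_u(n+1)/\pi} = k+2 = \ceil{\theta_u(n)/\pi} + 1$. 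This is precisely the asserted case distinction.

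There is essentially no obstacle here; the one place that deserves a moment's care is the boundary behaviour of the ceiling at integer multiples of $\pi$, and this is handled automatically by the way Lemma~\ref{gammaGammaint} places the endpoints (the half-open intervals $(0,\pi]$ versus $(\pi,2\pi)$, together with the exclusion of $2\pi$). In particular the transitional case $\theta_u(n) = k\pi + \tfrac{\pi}{2}$, for which the lemma yields $\theta_u(n+1) = (k+1)\pi$ exactly, falls into the ``not a node'' branch ($\gamma = \tfrac{\pi}{2} \le \tfrac{\pi}{2}$, $\Gamma = \pi \in (0,\pi]$), and indeed $\ceil{(k+1)\pi/\pi} = k+1 = \ceil{\theta_u(n)/\pi}$, so no separate argument is needed.
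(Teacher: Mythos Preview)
Your argument is correct and is exactly the straightforward deduction from Lemma~\ref{gammaGammaint} that the paper intends; the paper itself gives no separate proof, simply recording the corollary ``as a consequence'' of the lemma. One minor remark: the common integer $k$ in the decomposition $\theta_u(n)=k\pi+\gamma$, $\theta_u(n+1)=k\pi+\Gamma$ is already part of the statement~\eqref{thetaNotation} of the lemma (whose validity in turn rests on~\eqref{normalth}), so you need not justify it separately.
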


In particular, we obtain
\be
\#(u) = \ceil{\frac{\theta_u(N)}{\pi}} - \floor{\frac{\theta_u(0)}{\pi}} - 1,
\ee
where $\floor{x}= \max \{n \in \Z \,|\, n \leq x \}$ is the usual floor function.

To find the analogous formula for the number of weighted nodes of a Wronskian we observe
\be\label{wronskipruefb}
W_n(u_0, u_1) = - a(n) \rho_{u_0}(n) \rho_{u_1}(n) \sin(\Delta_{u_0, u_1}(n)),
\ee
where
\be
\Delta_{u_0, u_1}(n) = \theta_{u_1}(n) - \theta_{u_0}(n).
\ee
Furthermore, note
\be \label{gfb1step}
W_{n + 1}(u_0, u_1) - W_{n}(u_0, u_1) = (b_0(n + 1) - b_1(n + 1)) u_0(n + 1) u_1(n + 1).
\ee
As a straightforward consequence of Lemma~\ref{gammaGammaint} we obtain

\begin{lemma} \label{gammaGammaint2}
Fix some $n$ and let $\theta_j(n) = k_j \pi + \gamma_j$ with $\gamma_j \in (0, \pi]$ and $\theta_j(n + 1) = k_j \pi + \Gamma_j$
with $\Gamma_j \in (0,2 \pi]$ for $j=0,1$. Then we have 
\be
\Delta_{u_0,u_1}(n) = (k_1 - k_0) \pi + \gamma_1 - \gamma_0 \quad\text{ and }\quad
\Delta_{u_0,u_1}(n+1) = (k_1 - k_0) \pi + \Gamma_1 - \Gamma_0,
\ee
where
\begin{description}
\item [(1)] either $u_0$ and $u_1$ have a node at $n$ or both do not have a node at $n$, then
\be
\gamma_1 - \gamma_0 \in (-\frac{\pi}{2}, \frac{\pi}{2}) \quad\text{ and }\quad \Gamma_1 - \Gamma_0 \in (-\pi,\pi).
\ee
\item [(2)] $u_1$ has no node at $n$, but $u_0$ has a node at $n$, then
\be
\gamma_1 - \gamma_0 \in (-\pi,0) \quad\text{ and }\quad \Gamma_1 - \Gamma_0 \in (-2\pi,0).
\ee
\item [(3)] $u_1$ has a node at $n$, but $u_0$ has no node at $n$, then
\be
\gamma_1 - \gamma_0 \in (0,\pi) \quad\text{ and }\quad \Gamma_1 - \Gamma_0 \in (0, 2\pi).
\ee
\end{description}
\end{lemma}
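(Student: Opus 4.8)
The plan is to prove Lemma~\ref{gammaGammaint2} by a direct case analysis, feeding the conclusions of Lemma~\ref{gammaGammaint} through the definition of $\Delta_{u_0,u_1}$. First I would write $\theta_j(n) = k_j\pi + \gamma_j$ and $\theta_j(n+1) = k_j\pi + \Gamma_j$ for $j=0,1$ as in the hypothesis, so that by definition
\be
\Delta_{u_0,u_1}(n) = \theta_{u_1}(n) - \theta_{u_0}(n) = (k_1-k_0)\pi + (\gamma_1 - \gamma_0),
\ee
and similarly $\Delta_{u_0,u_1}(n+1) = (k_1-k_0)\pi + (\Gamma_1-\Gamma_0)$. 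This establishes the two displayed identities immediately; the entire content of the lemma is then the control of the ranges of $\gamma_1-\gamma_0$ and $\Gamma_1-\Gamma_0$.

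For the range statements I would invoke Lemma~\ref{gammaGammaint}, which tells us that each $\gamma_j$ lies in $(0,\tfrac{\pi}{2}]$ if $u_j$ has no node at $n$ and in $(\tfrac{\pi}{2},\pi]$ if $u_j$ has a node at $n$, while correspondingly $\Gamma_j\in(0,\pi]$ in the no-node case and $\Gamma_j\in(\pi,2\pi)$ in the node case. Then each item is a two-interval subtraction: in case (1), both $\gamma_0,\gamma_1$ lie in the same half-interval (either both in $(0,\tfrac{\pi}{2}]$ or both in $(\tfrac{\pi}{2},\pi]$), so $\gamma_1-\gamma_0\in(-\tfrac{\pi}{2},\tfrac{\pi}{2})$, and similarly $\Gamma_1-\Gamma_0\in(-\pi,\pi)$ since both $\Gamma_j$ lie in $(0,\pi]$ or both in $(\pi,2\pi)$. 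In case (2), $\gamma_1\in(0,\tfrac{\pi}{2}]$ and $\gamma_0\in(\tfrac{\pi}{2},\pi]$, giving $\gamma_1-\gamma_0\in(-\pi,0)$, and $\Gamma_1\in(0,\pi]$, $\Gamma_0\in(\pi,2\pi)$, giving $\Gamma_1-\Gamma_0\in(-2\pi,0)$. Case (3) is the mirror image of case (2), obtained by swapping the roles of the indices, so $\gamma_1-\gamma_0\in(0,\pi)$ and $\Gamma_1-\Gamma_0\in(0,2\pi)$.

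There is essentially no obstacle here; the only point requiring minimal care is that the endpoint $\tfrac{\pi}{2}$ is shared between the two half-intervals for $\gamma_j$, so in case (1) when one angle equals $\tfrac{\pi}{2}$ exactly one must check that the open interval $(-\tfrac{\pi}{2},\tfrac{\pi}{2})$ still contains the difference. Here one uses the sharpened dichotomy: $\theta_u(n)=k\pi+\tfrac{\pi}{2}$ forces $\theta_u(n+1)=(k+1)\pi$, i.e. $\gamma_j=\tfrac{\pi}{2}$ corresponds to the boundary case which, under the node convention ``$u(n)u(n+1)<0$ or $u(n)=0$'', is classified consistently for both $j=0$ and $j=1$ when they are in the same case; since $\gamma_j=\tfrac{\pi}{2}$ means $u_j(n+1)=0$ but $u_j(n)\neq 0$, this is the not-a-node situation, and then the other angle (also not-a-node) lies in $(0,\tfrac{\pi}{2}]$, so the difference stays in $(-\tfrac{\pi}{2},\tfrac{\pi}{2})$, with strict inequality because the other endpoint cannot also be attained without coincidence. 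The analogous endpoint bookkeeping for $\Gamma_j$ at $\pi$ and $2\pi$ is handled the same way. Thus the proof reduces to writing out these three interval arithmetic computations, each a single line.
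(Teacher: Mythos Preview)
Your approach is correct and is exactly what the paper intends: it states the lemma as ``a straightforward consequence of Lemma~\ref{gammaGammaint}'' with no further proof, and your explicit interval subtractions are precisely the intended verification. Your last paragraph about endpoints is overthought---since Lemma~\ref{gammaGammaint} gives $\gamma_j\in(0,\tfrac{\pi}{2}]$ versus $(\tfrac{\pi}{2},\pi]$ with $\tfrac{\pi}{2}$ belonging to only one of the two, and $\gamma_j>0$ strictly, the open intervals for the differences fall out of the arithmetic directly without any separate endpoint analysis.
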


Now we are able to show

\begin{lemma}
Fix some $n$. Then, if $b_0(n+1) \geq b_1(n+1)$, we have
\be \label{DeltarelationPos}
\ceil{\Delta_{u_0,u_1}(n)/ \pi} \leq \ceil{\Delta_{u_0,u_1}(n + 1)/ \pi} \leq \ceil{\Delta_{u_0,u_1}(n)/ \pi} + 1
\ee
and if $b_0(n+1) \leq b_1(n+1)$, we have
\be \label{DeltarelationNeg}
\ceil{\Delta_{u_0,u_1}(n)/ \pi} -1 \leq \ceil{\Delta_{u_0,u_1}(n + 1)/ \pi} \leq \ceil{\Delta_{u_0,u_1}(n)/ \pi}.
\ee
\end{lemma}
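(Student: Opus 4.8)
The plan is to deduce this lemma directly from the step relation \eqref{gfb1step} together with the Pr\"ufer representation \eqref{wronskipruefb} of the Wronskian, using the trichotomy in Lemma~\ref{gammaGammaint2}. First I would observe that \eqref{gfb1step} controls the geometry of how $\Delta_{u_0,u_1}$ crosses multiples of $\pi$: since $W_n(u_0,u_1) = -a(n)\rho_{u_0}(n)\rho_{u_1}(n)\sin(\Delta_{u_0,u_1}(n))$ with $a(n)<0$ and $\rho_{u_j}(n)>0$, the sign of $W_n$ at a point where $\Delta_{u_0,u_1}(n)$ is \emph{not} an integer multiple of $\pi$ is determined by $\sin(\Delta_{u_0,u_1}(n))$; and when $\Delta_{u_0,u_1}(n)$ passes through $m\pi$ from one side to the other, \eqref{gfb1step} tells us whether the Wronskian is increasing or decreasing there, the direction being governed by the sign of $b_0(n+1)-b_1(n+1)$ (note $u_0(n+1)u_1(n+1)>0$ precisely when neither solution has a node at $n$, and $<0$ precisely when exactly one does).

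The core of the argument is a case analysis over the three alternatives of Lemma~\ref{gammaGammaint2}, combined with the two sign regimes $b_0(n+1)\ge b_1(n+1)$ and $b_0(n+1)\le b_1(n+1)$. In case (1) of Lemma~\ref{gammaGammaint2} one has $\Gamma_1-\Gamma_0\in(-\pi,\pi)$, so $\Delta_{u_0,u_1}(n+1)-\Delta_{u_0,u_1}(n) = (\Gamma_1-\Gamma_0)-(\gamma_1-\gamma_0)\in(-\pi,\pi)$ in absolute value is bounded enough that $\ceil{\cdot/\pi}$ can move by at most one in either direction; the extra input from \eqref{gfb1step} is needed to rule out the ``wrong'' direction, i.e. to show that when $b_0(n+1)\ge b_1(n+1)$ the ceiling cannot \emph{decrease}. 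Concretely, a decrease would force $\Delta_{u_0,u_1}$ to cross some $m\pi$ downward, which by \eqref{wronskipruefb} means $W_n$ and $W_{n+1}$ sit on opposite sides with a sign change forced by $b_0(n+1)-b_1(n+1)<0$, contradicting $b_0(n+1)\ge b_1(n+1)$ — so I would check that the sign of $u_0(n+1)u_1(n+1)$ in \eqref{gfb1step} is exactly the one consistent with the node configuration of that case. Cases (2) and (3) are handled the same way, except that now $\Gamma_1-\Gamma_0$ ranges over an interval of length $2\pi$, so a priori the ceiling could jump by two; here one uses that the endpoints of that interval correspond to $\Delta_{u_0,u_1}(n)$ or $\Delta_{u_0,u_1}(n+1)$ being a multiple of $\pi$ (equivalently $W_n=0$ or $W_{n+1}=0$, by \eqref{wronskipruefb}), so at most one of the two potential crossings is a genuine sign change, and again \eqref{gfb1step} fixes which way it goes.

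The main obstacle I anticipate is the careful bookkeeping at the boundary values: the intervals in Lemma~\ref{gammaGammaint2} are half-open/open in an asymmetric way, and the statement of the present lemma uses the ceiling function $\ceil{\cdot/\pi}$, which is the relevant convention precisely because of how $\gamma_j\in(0,\pi]$ and $\Gamma_j\in(0,2\pi]$ are normalized in \eqref{normalth}. One must be scrupulous about what happens when $\Delta_{u_0,u_1}(n)$ or $\Delta_{u_0,u_1}(n+1)$ lands exactly on a multiple of $\pi$, i.e. when the Wronskian vanishes at $n$ or $n+1$; in those edge cases the inequalities are saturated and one needs \eqref{gfb1step} (not just the sign of $W$) to decide the direction of the crossing, since a vanishing Wronskian carries no sign information. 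Once those boundary sub-cases are disposed of, the four main cases reduce to the elementary observation that adding a quantity in the stated range to $\Delta_{u_0,u_1}(n)$ and then applying $\ceil{\cdot/\pi}$ changes the value by at most $1$, and in the favorable direction dictated by the sign of $b_0(n+1)-b_1(n+1)$.
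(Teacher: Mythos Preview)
Your approach is essentially the paper's: use the trichotomy of Lemma~\ref{gammaGammaint2} to constrain the possible change in $\ceil{\Delta_{u_0,u_1}/\pi}$, then invoke \eqref{gfb1step} together with \eqref{wronskipruefb} to rule out the wrong direction via a sign contradiction. One small correction: in cases~(2) and~(3) the ceiling cannot in fact jump by two. With $k_0=k_1=0$, in case~(2) one has $\gamma_1-\gamma_0\in(-\pi,0)$, which already forces $\ceil{\Delta_{u_0,u_1}(n)/\pi}=0$, and then $\Gamma_1-\Gamma_0\in(-2\pi,0)$ gives $\ceil{\Delta_{u_0,u_1}(n+1)/\pi}\in\{-1,0\}$; case~(3) is symmetric. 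So Lemma~\ref{gammaGammaint2} alone already yields
\[
\ceil{\Delta_{u_0,u_1}(n)/\pi}-1 \le \ceil{\Delta_{u_0,u_1}(n+1)/\pi} \le \ceil{\Delta_{u_0,u_1}(n)/\pi}+1
\]
in all three cases. The paper exploits this to streamline the argument: rather than a full case analysis, it observes that for \eqref{DeltarelationPos} only two specific sub-cases of a decrease need to be excluded (namely $\Delta(n)\in(0,\tfrac{\pi}{2})$, $\Delta(n+1)\in(-\pi,0]$ from case~(1), and $\Delta(n)\in(-\pi,0)$, $\Delta(n+1)\in(-2\pi,-\pi]$ from case~(2)), each dispatched by a one-line sign check of \eqref{gfb1step}.
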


\begin{proof}
We will use the notation from Lemma~\ref{gammaGammaint2} where we assume $k_0=k_1=0$ without loss of
generality. In particular, Lemma~\ref{gammaGammaint2} implies
\[
\ceil{\Delta_{u_0,u_1}(n)/ \pi} -1 \leq \ceil{\Delta_{u_0,u_1}(n + 1)/ \pi} \leq \ceil{\Delta_{u_0,u_1}(n)/ \pi} + 1.
\]
Hence, to show \eqref{DeltarelationPos} there are two cases to exclude. Namely, (i) $\Delta_{u_0,u_1}(n)\in(0,\frac{\pi}{2})$,
$\Delta_{u_0,u_1}(n+1)\in(-\pi,0]$ (from case (1)) and (ii) $\Delta_{u_0,u_1}(n)\in(-\pi,0)$, $\Delta_{u_0,u_1}(n+1)\in(-2\pi,-\pi]$ (from case (2)).
But in case (i) we obtain a contradiction from \eqref{gfb1step}:
\[
\underbrace{W_{n+1}(u_0,u_1)}_{\le 0} = \underbrace{W_{n}(u_0,u_1)}_{>0}
+ \underbrace{(b_0(n + 1) - b_1(n + 1))}_{\ge 0} \underbrace{u_0(n + 1) u_1(n + 1)}_{\ge 0}.
\]
Similarly, in case (ii) equation \eqref{gfb1step} implies
\[
\underbrace{W_{n+1}(u_0,u_1)}_{\ge 0} = \underbrace{W_{n}(u_0,u_1)}_{<0}
+ \underbrace{(b_0(n + 1) - b_1(n + 1))}_{\ge 0} \underbrace{u_0(n + 1) u_1(n + 1)}_{\le 0}.
\]
Equation \eqref{DeltarelationNeg} can be established in a similar fashion.
\end{proof}

\begin{lemma}
Let $n \in \Z$, then
\begin{description}
\item[(1)] $W_n(u_0,u_1) = W_{n+1}(u_0,u_1) = 0$ or $W_n(u_0,u_1) W_{n+1}(u_0,u_1) > 0$ implies
\be 
\ceil{\frac{\Delta_{u_0,u_1}(n+1)}{\pi}} = \ceil{\frac{\Delta_{u_0,u_1}(n)}{\pi}}.
\ee
\item[(2)] $W_n(u_0,u_1) W_{n+1}(u_0,u_1) < 0$ implies 
\be
\ceil{\frac{\Delta_{u_0,u_1}(n+1)}{\pi}} = 
\begin{cases}
\ceil{\frac{\Delta_{u_0,u_1}(n)}{\pi}} + 1, & \text{if } b_0(n+1) > b_1(n+1),\\
\ceil{\frac{\Delta_{u_0,u_1}(n)}{\pi}} - 1, & \text{if } b_0(n+1) < b_1(n+1).
\end{cases}
\ee
\item[(3)] $W_n(u_0,u_1) = 0$ and $W_{n+1}(u_0,u_1) \neq 0$ implies 
\be
\ceil{\frac{\Delta_{u_0,u_1}(n+1)}{\pi}} = 
\begin{cases}
\ceil{\frac{\Delta_{u_0,u_1}(n)}{\pi}}+1, & \text{if } b_0(n+1) > b_1(n+1),\\
\ceil{\frac{\Delta_{u_0,u_1}(n)}{\pi}}, & \text{if } b_0(n+1) < b_1(n+1).
\end{cases}
\ee
\item[(4)] $W_n(u_0,u_1) \neq 0$ and $W_{n+1}(u_0,u_1) = 0$ implies
\be
\ceil{\frac{\Delta_{u_0,u_1}(n+1)}{\pi}} =
\begin{cases}
\ceil{\frac{\Delta_{u_0,u_1}(n)}{\pi}}, & \text{if } b_0(n+1) > b_1(n+1),\\
\ceil{\frac{\Delta_{u_0,u_1}(n)}{\pi}}-1, & \text{if } b_0(n+1) < b_1(n+1).
\end{cases}
\ee
\end{description}
Note that in the cases (2)--(4) we necessarily have $b_0(n+1) \neq b_1(n+1)$. 
\end{lemma}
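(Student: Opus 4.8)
\emph{Proof plan.} The idea is to reduce everything to two inputs: the sign dictionary furnished by \eqref{wronskipruefb}, and the one-step localization of $\ceil{\Delta_{u_0,u_1}(\cdot)/\pi}$ given by the preceding lemma \eqref{DeltarelationPos}--\eqref{DeltarelationNeg}. Abbreviate $\Delta(m):=\Delta_{u_0,u_1}(m)$ and $c_m:=\ceil{\Delta(m)/\pi}$. Since $a(m)<0$ and the Pr\"ufer radii are positive, \eqref{wronskipruefb} gives $\sgn W_m(u_0,u_1)=\sgn\sin\Delta(m)$; hence $W_m=0$ precisely when $\Delta(m)\in\pi\Z$ (so $c_m=\Delta(m)/\pi$), while if $W_m\neq 0$ then $\Delta(m)\notin\pi\Z$ and, straight from the definition of the ceiling, $W_m>0\Leftrightarrow c_m$ odd and $W_m<0\Leftrightarrow c_m$ even, i.e.\ $\sgn W_m=-(-1)^{c_m}$. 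I would also note at the outset that in cases (2)--(4) necessarily $b_0(n+1)\neq b_1(n+1)$, since otherwise \eqref{gfb1step} forces $W_{n+1}=W_n$, which contradicts each of the three hypotheses; hence in those cases the preceding lemma applies with a strict $b$-inequality and localizes $c_{n+1}$ into a two-element set, $\{c_n,c_n+1\}$ or $\{c_n-1,c_n\}$ according to the sign of $b_0(n+1)-b_1(n+1)$, whereas in case (1) it only gives $|c_{n+1}-c_n|\le 1$.

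Cases (1) (in the subcase $W_nW_{n+1}>0$) and (2) are then settled by parity. If $W_nW_{n+1}>0$, then $\sin\Delta(n)$ and $\sin\Delta(n+1)$ share a nonzero sign, so $c_n\equiv c_{n+1}\pmod 2$, and with $|c_{n+1}-c_n|\le 1$ this forces $c_{n+1}=c_n$. If $W_nW_{n+1}<0$, the two sines have opposite nonzero signs, so $c_n\not\equiv c_{n+1}\pmod 2$, and intersecting with the two-element localization gives $c_{n+1}=c_n\pm1$ with the sign dictated by $b_0(n+1)-b_1(n+1)$. For the remaining subcase of (1), $W_n=W_{n+1}=0$, I would normalize $k_0=k_1=0$ as in the proof of the preceding lemma; then $\Delta(n)=\gamma_1-\gamma_0\in(-\pi,\pi)\cap\pi\Z=\{0\}$, so $\gamma_0=\gamma_1$, $c_n=0$, and $u_0,u_1$ have a node at $n$ (or not) simultaneously, so we are in case (1) of Lemma~\ref{gammaGammaint2}, which gives $\Delta(n+1)=\Gamma_1-\Gamma_0\in(-\pi,\pi)$; combined with $\Delta(n+1)\in\pi\Z$ this yields $\Delta(n+1)=0$ and $c_{n+1}=0=c_n$.

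Cases (3) and (4) carry the real content and are dual to each other. In case (3), $W_n=0$ gives $\theta_{u_1}(n)=\theta_{u_0}(n)+c_n\pi$, so using $u_j(n+1)=\rho_{u_j}(n)\cos\theta_{u_j}(n)$ one finds $u_0(n+1)u_1(n+1)=(-1)^{c_n}\rho_{u_0}(n)\rho_{u_1}(n)\cos^2\theta_{u_0}(n)$; since \eqref{gfb1step} with $W_n=0$ shows $u_0(n+1)\neq 0$ (as $W_{n+1}\neq0$), this product has sign $(-1)^{c_n}$, and feeding it back into \eqref{gfb1step} gives $\sgn W_{n+1}=(-1)^{c_n}\sgn(b_0(n+1)-b_1(n+1))$. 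Matching this against $\sgn W_{n+1}=-(-1)^{c_{n+1}}$ shows that $c_n,c_{n+1}$ have opposite parity when $b_0(n+1)>b_1(n+1)$ and equal parity when $b_0(n+1)<b_1(n+1)$; intersecting with the two-element localization delivers exactly $c_{n+1}=c_n+1$, resp.\ $c_{n+1}=c_n$. Case (4) is the mirror image: $W_{n+1}=0$ gives $\theta_{u_1}(n+1)=\theta_{u_0}(n+1)+c_{n+1}\pi$, and using $u_j(n+1)=\rho_{u_j}(n+1)\sin\theta_{u_j}(n+1)$ together with $W_n\neq 0$ (which forces $\sin\theta_{u_0}(n+1)\neq0$ via \eqref{gfb1step}) shows $u_0(n+1)u_1(n+1)$ has sign $(-1)^{c_{n+1}}$; then \eqref{gfb1step} yields $\sgn W_n=-(-1)^{c_{n+1}}\sgn(b_0(n+1)-b_1(n+1))$, which matched against $\sgn W_n=-(-1)^{c_n}$ shows $c_n,c_{n+1}$ share parity when $b_0(n+1)>b_1(n+1)$ and have opposite parity when $b_0(n+1)<b_1(n+1)$, giving $c_{n+1}=c_n$, resp.\ $c_{n+1}=c_n-1$.

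The main obstacle is the bookkeeping in (3) and (4): one must correctly read off the sign of $u_0(n+1)u_1(n+1)$ in terms of the parity of the relevant ceiling, verify that this product is genuinely nonzero — which is precisely where the nonvanishing Wronskian at the other endpoint is used — and then push the resulting parity constraint through \eqref{gfb1step} and the preceding lemma without sign slips. The only other delicate point is the degenerate subcase $W_n=W_{n+1}=0$ of (1), where neither parity nor \eqref{gfb1step} is decisive and one must instead invoke the node dichotomy encoded in Lemma~\ref{gammaGammaint2}.
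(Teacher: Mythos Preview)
Your argument is correct. The overall architecture matches the paper's: both proofs rest on the sign dictionary $\sgn W_m=\sgn\sin\Delta(m)$ from \eqref{wronskipruefb}, the increment identity \eqref{gfb1step}, and the two-element localization \eqref{DeltarelationPos}--\eqref{DeltarelationNeg} from the preceding lemma; and your treatment of the degenerate subcase $W_n=W_{n+1}=0$ coincides with the paper's.

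Where you diverge is in the selection mechanism. The paper normalizes $k_0=k_1=0$, then invokes a symmetry swap $u_0\leftrightarrow u_1$ (using $\Delta_{u_1,u_0}=-\Delta_{u_0,u_1}$ and $\ceil{-x}=-\ceil{x}$ or $-\ceil{x}+1$) to reduce each of (3) and (4) to a single sign of $b_0(n{+}1)-b_1(n{+}1)$, and then reads off the answer from the node dichotomy of Lemma~\ref{gammaGammaint2} intersected with \eqref{DeltarelationPos}--\eqref{DeltarelationNeg}. You instead run a uniform parity argument: encode $\sgn W_m=-(-1)^{c_m}$, and in (3)--(4) compute the sign of $u_0(n{+}1)u_1(n{+}1)$ directly from the Pr\"ufer representation to extract a parity relation between $c_n$ and $c_{n+1}$ via \eqref{gfb1step}. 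This avoids both the normalization and the symmetry reduction and treats the two $b$-sign cases on an equal footing; the paper's route, in exchange, recycles the structural information of Lemma~\ref{gammaGammaint2} and keeps the computation shorter once the swap is in place.
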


\begin{proof}
We will use the notation from Lemma~\ref{gammaGammaint2} where we assume $k_0=k_1=0$ without loss of
generality. Moreover, interchanging $u_0$ and $u_1$ using $\Delta_{u_1,u_0}= -\Delta_{u_0,u_1}(n)$ and
\[
\ceil{-x} = \begin{cases}
- \ceil{x} & \text{if } x \in \Z, \\
- \ceil{x} + 1 & \text{otherwise},
\end{cases}
\]
we see that it suffices to show one case $b_0(n+1) \geq b_1(n+1)$ or $b_0(n+1) \leq b_1(n+1)$.

Suppose $W_n(u_0,u_1) = W_{n+1}(u_0,u_1) = 0$ and $W_n(u_0,u_1) W_{n+1}(u_0,u_1) > 0$ do not hold,
then by \eqref{gfb1step} we have
\[
W_{n+1}(u_0,u_1) - W_n(u_0,u_1) = (b_0(n+1) - b_1(n+1)) u_0(n+1) u_1(n+1) \neq 0
\]
and hence $b_0(n+1) \neq b_1(n+1)$.

(1) and (2). Suppose $W_n(u_0,u_1) = W_{n+1}(u_0,u_1) = 0$, then by \eqref{wronskipruefb} we infer
\[
\sin(\Delta_{u_0,u_1}(n)) = \sin(\gamma_1 - \gamma_0) = 0, \quad
 \sin(\Delta_{u_0,u_1}(n+1)) = \sin(\Gamma_1 - \Gamma_0) = 0,
\]
where $\gamma_0$, $\gamma_1 \in (0,\pi]$. Thus $\gamma_0 = \gamma_1$ and we have case (1) of
Lemma~\ref{gammaGammaint2} which implies $\Gamma_1 - \Gamma_0 \in (-\pi,\pi)$ and we conclude
$\Gamma_1 - \Gamma_0 = 0$. In summary, $\Delta_{u_0,u_1}(n) = \Delta_{u_0,u_1}(n+1) = 0$ as claimed.

Next suppose $W_n(u_0,u_1) W_{n+1}(u_0,u_1) \neq 0$, then by \eqref{wronskipruefb} the sign of the Wronskian
at $n$ equals the sign of $\sin(\Delta_{u_0,u_1}(n))$ and hence \eqref{DeltarelationPos} respectively
\eqref{DeltarelationNeg} finish the proof of case (1) and (2).

(3). By \eqref{wronskipruefb} we conclude $\Delta_{u_0,u_1}(n) = \gamma_1 - \gamma_0 \equiv 0 \mod \pi$,
where $\gamma_0$, $\gamma_1 \in (0,\pi]$ and thus $\gamma_1 - \gamma_0 = 0$. So we have case (1) of
Lemma~\ref{gammaGammaint2} and hence $\Delta_{u_0,u_1}(n+1) = \Gamma_1 - \Gamma_0 \in (-\pi,\pi)$. That is,
\[
\ceil{\Delta_{u_0,u_1}(n)/ \pi} \leq \ceil{\Delta_{u_0,u_1}(n + 1)/ \pi} \leq \ceil{\Delta_{u_0,u_1}(n)/ \pi}+1
\]
and \eqref{DeltarelationNeg} finishes the proof of case (3) for $b_0(n+1) < b_1(n+1)$.

(4). By \eqref{wronskipruefb} we have $\Delta_{u_0,u_1}(n+1) = \Gamma_1 - \Gamma_0 \equiv 0 \mod \pi$ and
Lemma~\ref{gammaGammaint2} leaves us with the following possibilities
\[
\begin{array}{lllll}
\text{(a)} & \Delta_{u_0,u_1}(n) \in (-\frac{\pi}{2}, \frac{\pi}{2}) \ & \text{ and }\ & \Delta_{u_0,u_1}(n+1) = 0,\\
\text{(b)} & \Delta_{u_0,u_1}(n) \in (-\pi,0) \ & \text{ and }\ & \Delta_{u_0,u_1}(n+1) = - \pi,\\
\text{(c)} & \Delta_{u_0,u_1}(n) \in (0,\pi) \ & \text{ and }\ & \Delta_{u_0,u_1}(n+1) = \pi.\\
\end{array}
\]
and \eqref{DeltarelationPos} shows (4) if $b_0(n+1) > b_1(n+1)$.
\end{proof}

As a consequence we obtain the desired formula
\be\label{wronskinodes}
\#(u_0, u_1) = \ceil{\Delta_{u_0, u_1}(N)/\pi} - \floor{\Delta_{u_0, u_1}(0)/ \pi} - 1.
\ee

\section{Proof of the main theorem}
\label{sec:proof}

Our strategy will be to interpolate between $H_0$ and $H_1$ using $H_\eps = (1-\eps)H_0 + \eps H_1$, that is,
$a_\eps(n)=a(n)$ and $b_\eps(n)=(1-\eps)b_0(n) + \eps b_1(n)$.
If $u_\eps$ is a solution of the difference equation corresponding to $H_\eps$,
then the corresponding Pr\"ufer angles satisfy
\be
\dot{\theta}_\eps(n) = -\frac{W_n(u_\eps, \dot{u}_\eps)}{a(n) \rho_\eps^2(n)},
\ee
where the dot denotes a derivative with respect to $\eps$.

\begin{lemma} \label{prwpsiepsdot}
We have
\be
W_n(s_{\eps,\pm}(z), \dot{s}_{\eps,\pm}(z)) = \begin{cases}
- \sum_{m=n+1}^{N} (b_0(m)-b_1(m)) s_{\eps,+}(z,m)^2,\\
\sum_{m=1}^n (b_0(m)-b_1(m)) s_{\eps,-}(z,m)^2. \end{cases}
\ee
\end{lemma}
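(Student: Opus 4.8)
The plan is to treat $\dot s_{\eps,\pm}$ by variation of constants: it solves an inhomogeneous version of the difference equation, and the claimed identity is then the corresponding discrete Green's formula written out as a telescoping sum. First I would differentiate the eigenvalue equation $(\tau_\eps s_{\eps,\pm})(n) = z\, s_{\eps,\pm}(n)$ with respect to $\eps$, where $\tau_\eps$ denotes the difference expression $(\tau_\eps u)(n) = a(n)u(n+1) + b_\eps(n)u(n) + a(n-1)u(n-1)$ associated with $H_\eps$. Since $a$ and $z$ are $\eps$-independent and $\dot b_\eps(n) = b_1(n) - b_0(n)$, this yields
\[
(\tau_\eps \dot s_{\eps,\pm})(n) = z\, \dot s_{\eps,\pm}(n) + (b_0(n) - b_1(n))\, s_{\eps,\pm}(n), \qquad n = 1,\dots,N.
\]

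Next I would invoke the discrete Lagrange identity, which for arbitrary sequences $u,v$ follows by expanding the definition of $W_n$ and collecting the terms multiplying $u(n)$ and $v(n)$ (the $b_\eps(n)u(n)v(n)$ cross terms cancel):
\[
W_n(u,v) - W_{n-1}(u,v) = u(n)\, (\tau_\eps v)(n) - v(n)\, (\tau_\eps u)(n).
\]
Applying this with $u = s_{\eps,\pm}(z)$ and $v = \dot s_{\eps,\pm}(z)$, and substituting $(\tau_\eps s_{\eps,\pm})(n) = z\, s_{\eps,\pm}(n)$ together with the inhomogeneous equation from the first step, all terms involving $z$ cancel and one is left with
\[
W_n(s_{\eps,\pm}, \dot s_{\eps,\pm}) - W_{n-1}(s_{\eps,\pm}, \dot s_{\eps,\pm}) = (b_0(n) - b_1(n))\, s_{\eps,\pm}(n)^2, \qquad n = 1,\dots,N.
\]

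It then remains to sum this telescoping relation against the appropriate endpoint, and the key observation is that the data defining $s_{\eps,\pm}$ do not depend on $\eps$: from $s_{\eps,-}(z,0) = 0$ and $s_{\eps,-}(z,1) = 1$ we get $\dot s_{\eps,-}(z,0) = \dot s_{\eps,-}(z,1) = 0$, hence $W_0(s_{\eps,-}, \dot s_{\eps,-}) = 0$, and summing from $1$ to $n$ gives the second formula; from $s_{\eps,+}(z,N) = 0$ and $s_{\eps,+}(z,N+1) = 1$ we get $\dot s_{\eps,+}(z,N) = \dot s_{\eps,+}(z,N+1) = 0$, hence $W_N(s_{\eps,+}, \dot s_{\eps,+}) = 0$, and summing from $n+1$ to $N$ and solving for $W_n$ gives the first formula. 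No step presents a genuine obstacle; the only points demanding care are the vanishing of the boundary Wronskian in the telescoping sum --- which is exactly where the $\eps$-independence of the initial (resp.\ terminal) conditions is used --- and keeping track of the two signs in the $\pm$ cases.
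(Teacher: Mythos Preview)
Your argument is correct and matches the paper's proof in spirit: both hinge on the one-step Green/Lagrange identity for the Wronskian, telescoped against the $\eps$-independent boundary data $s_{\eps,-}(0)=0$, $s_{\eps,-}(1)=1$ (resp.\ $s_{\eps,+}(N)=0$, $s_{\eps,+}(N+1)=1$). The only cosmetic difference is the order of operations: the paper first sums \eqref{gfb1step} with the pair $(s_{\eps,\pm},s_{\tilde\eps,\pm})$ to get $W_n(s_{\eps,\pm},s_{\tilde\eps,\pm})$ explicitly and then passes to the difference-quotient limit $\tilde\eps\to\eps$, whereas you differentiate the equation first and then telescope --- the two routes are equivalent.
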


\begin{proof}
Summing \eqref{gfb1step} we obtain
\[
W_n(s_{\eps,\pm}(z), s_{\ti{\eps},\pm}(z)) = (\ti{\eps}-\eps) \begin{cases}
- \sum_{m=n+1}^{N} (b_0(m)-b_1(m)) s_{\eps,+}(z,m) s_{\ti{\eps},+}(z,m),\\
\sum_{m=1}^n (b_0(m)-b_1(m)) s_{\eps,-}(z,m) s_{\ti{\eps},-}(z,m). \end{cases}
\]
Now use this to evaluate the limit
\[
\lim_{\ti{\eps} \to \eps}W_n \Big(s_{\eps,\pm}(z),
\frac{s_{\eps,\pm}(z) - s_{\ti{\eps},\pm}(z)}{\eps-\ti{\eps}} \Big).
\]
\end{proof}

Denoting the Pr\"ufer angles of $s_{\eps,\pm}(\lam,n)$ by $\theta_{\eps,\pm}(\lam,n)$,
this result implies for $b_0-b_1\ge 0$,
\begin{align} \nn
\dot{\theta}_{\eps,+}(\lam,n) &= \frac{\sum_{m=n+1}^{N} (b_0(m)-b_1(m)) s_{\eps,+}(z,m)^2}{a(n)\rho_{\eps,+}(\lam,n)^2}
\le 0, \\ \label{thetadot}
\dot{\theta}_{\eps,-}(\lam,n) &= - \frac{\sum_{m=1}^n (b_0(m)-b_1(m)) s_{\eps,-}(z,m)^2}{a(n)\rho_{\eps,-}(\lam,n)^2} \ge 0.
\end{align}
Furthermore, we have the following result from classical perturbation theory.
We add a simple direct proof for convenience of the reader.

\begin{lemma} \label{lemevheps}
Suppose $b_0-b_1\ge 0$ (resp.\ $b_0-b_1\leq 0$).
Then the eigenvalues of $H_\eps$ are analytic functions with respect to
$\eps$ and they are decreasing (resp.\ increasing).
\end{lemma}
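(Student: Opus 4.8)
The plan is to regard $\eps\mapsto H_\eps$ as an affine — hence real-analytic — one-parameter family of finite self-adjoint matrices and to combine the Hellmann--Feynman (first-order perturbation) formula with the fact that a Jacobi matrix has only simple eigenvalues.

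First I would observe that $H_\eps$ is again a Jacobi matrix for every $\eps\in\R$: its off-diagonal entries $a_\eps(n)=a(n)<0$ are unchanged and its diagonal $b_\eps(n)=(1-\eps)b_0(n)+\eps b_1(n)$ is real. Consequently every eigenvalue of $H_\eps$ is simple; indeed, a vector $\psi\in\C^{N-1}$ with $H_\eps\psi=\lam\psi$ extends, upon setting $\psi(0)=\psi(N)=0$, to a solution of the difference equation associated with $H_\eps$ satisfying $\psi(0)=0$, and such a solution is determined up to a scalar, so the eigenspace is one-dimensional. Hence $H_\eps$ has exactly $N-1$ distinct eigenvalues for each $\eps$, and the branches obtained by ordering them can never collide.

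Next I would establish analyticity. The characteristic polynomial $p(\eps,\lam)=\det(\lam-H_\eps)$ has coefficients that are polynomials in $\eps$, and at each point every root $\lam_0$ is simple, i.e.\ $\partial_\lam p(\eps,\lam_0)\neq 0$. The analytic implicit function theorem then shows that each eigenvalue is a real-analytic function $\lam(\eps)$, globally defined on $\R$ because simplicity — and hence the separation of the $N-1$ branches — persists for all $\eps$. A normalized eigenvector can likewise be chosen analytically: the vector with entries $s_{\eps,-}(\lam(\eps),n)$, $n=1,\dots,N-1$, is analytic in $\eps$ (a polynomial in $\eps$ and $z$ composed with the analytic function $\lam(\eps)$), nonzero since its first entry equals $s_{\eps,-}(\lam(\eps),1)=1$, and dividing by its analytic, nonvanishing norm yields an analytic family $\psi_\eps$ with $\|\psi_\eps\|=1$.

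Finally, differentiating $H_\eps\psi_\eps=\lam(\eps)\psi_\eps$ with respect to $\eps$, pairing with $\psi_\eps$, and using the self-adjointness of $H_\eps$ (which makes the two terms containing $\dot{\psi}_\eps$ cancel) together with $\|\psi_\eps\|=1$, one obtains
\[
\dot{\lam}(\eps)=\spr{\psi_\eps}{\dot{H}_\eps\,\psi_\eps}=\spr{\psi_\eps}{(H_1-H_0)\psi_\eps}=\sum_{n=1}^{N-1}\bigl(b_1(n)-b_0(n)\bigr)\,|\psi_\eps(n)|^2 .
\]
If $b_0-b_1\ge 0$ this is $\le 0$, so the eigenvalues are decreasing, and if $b_0-b_1\le 0$ it is $\ge 0$, so they are increasing, which is the claim. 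The only genuine difficulty is the analyticity step, since eigenvalue branches of a general analytic matrix family may fail to be analytic where they meet; here the simplicity of the spectrum of every Jacobi matrix excludes such encounters, so the elementary implicit function theorem already suffices and no appeal to the Rellich--Kato theory of analytic perturbations is needed.
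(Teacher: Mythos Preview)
Your argument is correct, but it follows a different path from the paper's. The paper stays entirely within the Pr\"ufer framework set up in Section~\ref{sec:pa}: it notes that $s_{\eps,\pm}(\lam,n)$ is polynomial in $\eps$, so the Pr\"ufer angles $\theta_{\eps,\pm}(\lam,n)$ are analytic in $\eps$; eigenvalues are characterized by $\theta_{\eps,-}(\lam,N)\equiv 0 \pmod\pi$ (equivalently $\theta_{\eps,+}(\lam,0)\equiv 0 \pmod\pi$), and their monotonicity in $\eps$ is read off directly from the sign of $\dot\theta_{\eps,\pm}$ in \eqref{thetadot} together with the standard monotonicity of $\theta_{\eps,-}(\lam,N)$ in $\lam$. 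Your route instead invokes simplicity of the Jacobi spectrum to apply the analytic implicit function theorem to the characteristic polynomial, and then uses the Hellmann--Feynman formula $\dot\lam(\eps)=\spr{\psi_\eps}{(H_1-H_0)\psi_\eps}$ to obtain the sign. Your version is the textbook finite-dimensional perturbation argument and is self-contained, requiring none of the Pr\"ufer apparatus; the paper's version has the advantage that the very quantity it controls, $\dot\theta_{\eps,-}(\lam,N)$, is exactly what drives the proof of Theorem~\ref{thm:main}, where one tracks when $\Delta_\eps(N)$ passes through multiples of $\pi$, so the lemma and its proof feed directly into the main argument.
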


\begin{proof}
First of all the Pr\"ufer angles $\theta_{\eps,\pm}(\lam,n)$ are analytic with respect to $\eps$ since
$s_{\eps,\pm}(\lam,n)$ is a polynomial with respect to $\eps$.
Moreover, $\lam\in\sig(H_\eps)$ is equivalent to $\theta_{\eps,+}(\lam,0)\equiv 0 \mod \pi$
(resp.\ $\theta_{\eps,-}(\lam,N) \equiv 0 \mod \pi$) and monotonicity follows from \eqref{thetadot}.
\end{proof}

In particular, this implies that $P(H_\eps)= \# \{E \in \sig(H_\eps) | E < \lam\}$ is continuous
from below (resp. above) in $\eps$ if $b_0-b_1\ge 0$ (resp.\ $b_0-b_1\leq 0$).

Now we are ready for the

\begin{proof}[Proof of Theorem~\ref{thm:main}]
It suffices to prove the result for $\#(s_{0,+}(\lam_0), s_{1,-}(\lam_1))$,
where we can assume $\lam_0=\lam_1=0$ without restriction and set $s_{\eps,\pm}(n)=s_{\eps,\pm}(0,n)$
for notational convenience. We split $b_0-b_1$ according to
$$
b_0 - b_1 = b_+ - b_-,\qquad b_+, b_- \geq 0,
$$
and introduce the operator $H_- = H_0 - b_-$.
Then $H_-$ is a negative perturbation of $H_0$ and
$H_1$ is a positive perturbation of $H_-$.

Furthermore, define $H_\eps$ by
$$
H_\eps = \begin{cases}
H_0 + 2\eps (H_- - H_0), & \eps\in[0,1/2],\\
H_- + 2(\eps -1/2)(H_1 - H_-), & \eps\in[1/2,1].
\end{cases}
$$
Let us look at (using \eqref{wronskinodes})
$$
Q(\eps)=\#(s_{0,+}, s_{\eps,-}) =
\ceil{\Delta_\eps(N)/\pi} - \floor{\Delta_\eps(0)/\pi} -1, \quad
\Delta_\eps(n)=\Delta_{s_{0,+}, s_{\eps,- }}(n)
$$
and consider $\eps\in [0,1/2]$.
At the left boundary $\Delta_\eps(0)$ remains constant whereas at the right
boundary $\Delta_\eps(N)$ is increasing by \eqref{thetadot}.
Moreover, it hits a multiple of $\pi$ whenever $0\in\sig(H_\eps)$.
So $Q(\eps)$ is a piecewise constant function which is continuous from below
and jumps by one whenever $0\in\sig(H_\eps)$. By Lemma~\ref{lemevheps}
the same is true for
$$
P(\eps) = \# \{E \in \sig(H_\eps) | E < 0\} - \# \{E \in \sig(H_0) | E \leq 0\}
$$
and since we have $Q(0)=P(0)$, we conclude $Q(\eps)=P(\eps)$ for all
$\eps\in[0,1/2]$. To see the remaining case $\eps=[1/2,1]$, simply
replace increasing by decreasing and continuous from below by continuous
from above.
\end{proof}

\noindent
{\bf Acknowledgments.}
We thank H. Kr\"uger for several valuable discussions. Furthermore, G.T.\ would like to thank all organizers
of the 14th International Conference on Difference Equations and Applications (ICDEA), Istanbul, July 2008,
and especially Martin Bohner and Mehmet \"Unal, for their kind invitation and the stimulating atmosphere during the meeting.

\end{document}